\newcommand{\R}{\mathbb{R}}
\newcommand{\Z}{\mathbb{Z}}
\newcommand{\sfu}{\dot{\mathsf{V}}}
\newcommand{\sfv}{\mathsf{V}}
\newcommand{\sfk}{\mathsf{K}}
\newcommand{\sn}{\check{\mathbb{R}}}
\newcommand{\rem}{\varrho}
\newtheorem{lemma}{Lemma}
\newtheorem{theorem}{Theorem}
\theoremstyle{definition}
\newtheorem{definition}{Definition}
\newtheorem{example}{Example}
\newtheorem{remark}{Remark}
\begin{document}

\title{Canonical Cauchy sequences for real numbers}
\author{Rinat Kashaev}
\address{Section de math\'ematiques, Universit\'e de Gen\`eve,
2-4 rue du Li\`evre, 1211 Gen\`eve 4, Suisse\\}
\email{rinat.kashaev@unige.ch}
\date{October 12, 2020}

\begin{abstract}
 Based on continued fractions with subtractions, we identify the set of real numbers with the set of infinite integer sequences with all terms but the first one greater or equal to two. Each such sequence produces in a canonical way a unique strictly decreasing Cauchy sequence of rationals which converges to the corresponding real number. The correspondence is such that the standard order of real numbers is translated to the lexicographic order of sequences.
\end{abstract}
\maketitle
\section{Introduction}
According to the common definition,  a real number is an equivalence class in the set of Cauchy sequences of rational numbers, see for example~\cite{MR0038404,MR2004919}. Being theoretically proper, this definition is not convenient for practical calculations as each equivalence class is an infinite set. Thus, solving the problem of choosing in some canonical way a unique representative from each equivalence class would not only improve the practical utility of Cauchy sequences, but  also would provide us with a simpler definition of real numbers. 

The existing descriptions of real numbers by infinite decimals or simple continued fractions solve the problem only partially since the representing Cauchy sequences are unique for irrational numbers while there are two representatives for infinitely many rational numbers. A convention on an extra choice between two representatives for those rational numbers is not always satisfactory since it could complicate recursive algebraic calculations that use only finite but arbitrarily large parts of the representing Cauchy sequences. 
In the case of simple continued fractions, one can also remark that the rational numbers are distinguished by finiteness of their continued fractions as opposed to infinite continued fractions of irrational numbers~\cite{MR0146146}.

Motivated by these considerations, in this paper, we suggest a simple modification of simple continued fractions into ``subtraction continued fractions'' by changing all additions to subtractions. This allows us to construct a natural right inverse to the projection map from the set of Cauchy sequences of rational numbers to the set of real numbers. 

\subsection*{Outline} In Section~\ref{sec:scf}, we introduce a set $\sn$ of s-numbers. These are infinite integer sequences with all terms but the first one greater or equal to two. Then,  by using the subtraction continued fractions, we construct a map $ \imath\colon \R\to\sn$.
 In Section~\ref{cauchy-sequences}, to each  s-number, we associate a bounded from below strictly decreasing sequence of rational numbers called right convergents (Theorem~\ref{theorem1}).  The limit of that sequence gives us the inverse map $\imath^{-1}\colon \sn\to\R$ (Theorem~\ref{theorem2}). In Theorem~\ref{theorem3}, we prove that the bijection $ \imath$ relates the standard order of $\R$ to the lexicographic order of sequences. In the final Section~\ref{final-sec3}, we develop a matrix technique for recursive conversion of simple continued fractions to subtraction continued fractions and vice versa.

\section{Subtraction continued fractions}\label{sec:scf}
 \subsection{Integer sequences and s-numbers}
 We denote $\omega:=\Z_{\ge0}$ (the first infinite ordinal).
 For  sets $A$ and $B$, $B^A$ denotes the set of all maps $f\colon A\to B$. In particular, $\Z^\omega$ denotes the set of all integer sequences so that, for any $s\in\Z^\omega$, we write $s_n$ instead of $s(n)$.
\begin{definition}
 An  \emph{\color{blue} s-number} is an element $s\in \Z^\omega$ such that  $s_n\ge2$ if $n>0$. The set of all s-numbers is denoted $\sn$. For any $n\in\omega$, the integer $s_n$ is called $n$-th \emph{\color{blue} (partial) quotient} of $s$.
\end{definition}

For any $n\in\omega$, we associate the $n$-th \emph{\color{blue} remnant}
\begin{equation}
\rem_n\colon \sn\to\sn,\quad \rem_n(s)=\rem_ns,\quad (\rem_ns)_k=s_{n+k}\quad \forall k\in\omega.
\end{equation}
In other words, the $n$-th remnant $\rem_ns$ of $s\in\sn$ is obtained from $s$ by deleting the first $n$ numbers in the sequence $(s_0,s_1,\dots)$.  Thus, for any $n\in\omega$, we can write an equality
\begin{equation}\label{split-s-n}
s=(s_0,s_1,s_2,\dots, s_{n-1},\rem_ns)
 \end{equation}
if $n>0$ and $s=\rem_0s$ otherwise. 
\subsection{Subtraction continued fraction}
We define a map $\imath\colon\R\to\sn$  called \emph{\color{blue} subtraction continued fraction} by associating to a real number $x$ the s-number $\imath x$ constructed by the following recursive procedure.

Define $r_0x:=x$. There exists a unique integer $ x_0$ such that
 \begin{equation}
  1\ge  x_0-r_0x>0\ 
\Leftrightarrow \  1\le \frac1{ x_0-r_0x}<\infty.
\end{equation}
Assume that, for an integer $k\in\omega$, we have constructed tuples 
\begin{equation}
( x_0, x_1,\dots, x_{k})\in \Z^{k+1},\quad
(r_0x,\dots,r_{k}x)\in \R^{k+1}
\end{equation}
such that 
\begin{equation}
1\ge  x_i-r_ix>0\quad \forall i\le k,\quad r_{i+1}x=\frac1{ x_i-r_ix}\quad  \forall i< k
\end{equation}
where there is no the second condition if $k=0$. Then we define
\begin{equation}
r_{k+1}x:=\frac1{ x_{k}-r_{k}x}\ge 1
\end{equation}
and there exists a unique integer $ x_{k+1}$ such that $1\ge x_{k+1}-r_{k+1}x>0$. 
In this way, we obtain
an s-number $ \imath x\in\sn$ defined by
\begin{equation}
 ( \imath x)_n= x_n\quad\forall n\in\omega.
\end{equation}
\subsection{Finite subtraction continued fractions}
For any $n\in \omega$, our recursive procedure for the subtraction continued fraction produces a map
\begin{equation}
r_n\colon\R\to\R,\quad x\mapsto r_nx
\end{equation}
which we also call $n$-th  \emph{\color{blue} remnant}. Notice that the remnants of real numbers are real numbers, while the remnants of s-numbers are s-numbers, and the map $ \imath$ intertwines the two types of remnants:
\begin{equation}
\rem_n\circ \imath = \imath\circ r_n\quad \forall n\in\omega.
\end{equation}

For any $x\in\R$ and $n\in\omega$, we have an equality 
\begin{equation}\label{cuts}
 x=\langle  x_0 , x_1, x_2,\dots, x_{n-1} ,r_nx\rangle:= x_0-\cfrac1{ x_1-\cfrac1{x_2-\cfrac1{\ddots -\frac1{ x_{n-1}-\frac1{r_nx}}}}}
\end{equation}
if $n>0$ and $x=r_0x$ otherwise, and we call it $n$-th \emph{\color{blue} finite subtraction continued fraction} of $x$, see~\cite{MR0146146} for a general definition of continued fractions.

Let us stress here that the subtraction continued fraction of a real number, being an s-number, is always an infinite sequence, while finite sequences are understood either in the sense of equalities~\eqref{cuts},  where the last term is a real number not necessarily an integer, or equalities~\eqref{split-s-n} where the last term is an s-number.

\begin{example} The subtraction continued fraction of an integer $m\in\Z$ is of the form
\begin{equation}
 \imath m=( m+1,2,2,2,\dots)
\end{equation}
that is an s-number such that $(\imath m)_n=m_n=2$ if $n>0$ and $m+1$ otherwise. In particular, $ 1_n=2$ for all $n\in\omega$. We also have the remnants $r_nm=1$ for all $n>0$. Equalities \eqref{split-s-n} for $ \imath m$ take the form
\begin{equation}\label{imath mn}
 \imath m=(m+1,\underbrace{2,\dots,2}_{n-1 \text{ times}}, \imath 1)\quad \forall n>0,
\end{equation}
while equalities~\eqref{cuts} with $x=m$ take the form
\begin{equation}\label{infty-ineqs-m}
m=\langle m+1 ,\underbrace{2,\dots,2}_{n-1 \text{ times}} ,1\rangle \quad \forall n>0.
\end{equation}
  More generally, a real number $x$ is rational if and only if there exists $N\in\omega$ such that $ x_n=2$ for all $n\ge N$ or, equivalently,  $r_nx=1$ for all $n\ge N$.
 \end{example}
\begin{remark}
As the number  $2$ plays a distinguished role for s-numbers, we will abbreviate by $\widehat{k}$ a sequence of $2$'s of length $k\in\omega$. With this notation, equalities in~\eqref{imath mn} and \eqref{infty-ineqs-m} read
  \begin{equation}
 \imath m=(m+1,\widehat{n-1}, \imath1),\quad m=\langle m+1 ,\widehat{n-1} ,1\rangle\quad \forall n>0.
\end{equation}
\end{remark}
\section{Canonical Cauchy sequences}\label{cauchy-sequences}
\subsection{M\"obius transformations}
We will use the standard M\"obius action of the group $ \operatorname{GL}_2\R$ on the extended real line $\widehat\R:=\R\cup\{\infty\}$:
\begin{equation}
g(x)=\frac{ax+b}{cx+d}, \quad g=\begin{pmatrix}
 a&b\\
 c&d
\end{pmatrix}\in \operatorname{GL}_2\R,\quad x\in\widehat\R.
\end{equation}
\begin{lemma}\label{g(t)}
Let a matrix $g=\left(
\begin{smallmatrix}
 a&b\\
 c&d
\end{smallmatrix}\right)\in \operatorname{SL}_2\!\R$ be such that
\begin{equation}\label{abcd-ineqs}
c>0,\quad  c+ d>0.
\end{equation}
Then, for  all $t\ge1$, $g(t)=\frac{at+b}{ct+d}$ is a well defined strictly growing function satisfying the inequalities
\begin{equation}
0<\frac ac-g(t)\le \frac1{(c+d)c},\quad 0\le g(t)-\frac {a+b}{c+d}<\frac1{(c+d)c}.
\end{equation}
\end{lemma}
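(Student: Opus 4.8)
The plan is to reduce everything to two exact identities coming from the determinant condition $ad-bc=1$, together with the observation that $\frac ac=g(\infty)$ and $\frac{a+b}{c+d}=g(1)$, so that the two quantities to be estimated are nothing but $g(\infty)-g(t)$ and $g(t)-g(1)$.

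First I would check that $g$ is well defined and strictly increasing on $[1,\infty)$. The denominator $t\mapsto ct+d$ is affine with positive slope $c$, and its value at $t=1$ is $c+d>0$; hence $ct+d\ge c+d>0$ for every $t\ge1$, so $g(t)$ is finite there. A direct computation of the difference quotient (or of $g'$) gives, for $1\le s<t$,
\[
g(t)-g(s)=\frac{(at+b)(cs+d)-(as+b)(ct+d)}{(ct+d)(cs+d)}=\frac{(t-s)(ad-bc)}{(ct+d)(cs+d)}=\frac{t-s}{(ct+d)(cs+d)}>0,
\]
so $g$ is strictly increasing; in particular $g(t)\ge g(1)=\frac{a+b}{c+d}$, which is the left inequality of the second displayed pair.

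Next I would compute the two differences explicitly. Using $ad-bc=1$,
\[
\frac ac-g(t)=\frac{a(ct+d)-c(at+b)}{c(ct+d)}=\frac{1}{c(ct+d)}>0,
\]
and since $ct+d\ge c+d$ for $t\ge1$ this is at most $\frac1{(c+d)c}$, which is the first displayed pair. Similarly,
\[
g(t)-\frac{a+b}{c+d}=\frac{(at+b)(c+d)-(a+b)(ct+d)}{(c+d)(ct+d)}=\frac{t-1}{(c+d)(ct+d)}\ge0,
\]
and, after clearing the positive denominators, the strict upper bound $g(t)-\frac{a+b}{c+d}<\frac1{(c+d)c}$ becomes $c(t-1)<ct+d$, i.e.\ $-c<d$, i.e.\ $c+d>0$, which is exactly the hypothesis.

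There is no genuine obstacle in this lemma: the only thing requiring attention is the bookkeeping of strict versus non-strict inequalities, and noticing that the hypothesis $c+d>0$ is used repeatedly --- for positivity of the denominator on $[1,\infty)$, for the estimate $ct+d\ge c+d$, and (crucially) for the strict upper bound in the second pair.
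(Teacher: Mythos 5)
Your proof is correct and follows essentially the same route as the paper: both arguments rest on the determinant identity $ad-bc=1$ together with the bound $ct+d\ge c+d>0$ for $t\ge1$. The only cosmetic difference is that you compute each difference as an exact rational function of $t$ (namely $\frac{a}{c}-g(t)=\frac{1}{c(ct+d)}$ and $g(t)-\frac{a+b}{c+d}=\frac{t-1}{(c+d)(ct+d)}$) and bound it directly, whereas the paper sandwiches $g(t)$ between $g(1)$ and $\lim_{t\to\infty}g(t)=\frac ac$ and then invokes the single identity $\frac ac-\frac{a+b}{c+d}=\frac1{(c+d)c}$.
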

\begin{proof}
Let $t\ge1$.  The denominator of $g(t)$ is strictly positive:
\begin{equation}
ct+d\ge c+d>0,
\end{equation}
as well as its derivative:
\begin{equation}
\frac{\partial g(t)}{\partial t}=\frac{(ct+d)a-(at+b)c}{(ct+d)^2}=\frac{1}{(ct+d)^2}>0.
\end{equation}
Thus, we obtain inequalities
\begin{equation}
\frac{a+b}{c+d}=g(1)\le g(t)<\lim_{t'\to+\infty}g(t')=\frac{a}{c}
\end{equation}
which imply that
\begin{equation}
0<\frac ac-g(t)\le\frac ac-\frac{a+b}{c+d}=\frac1{(c+d)c}
\end{equation}
and 
\begin{equation}
0\le g(t)-\frac{a+b}{c+d}<\frac ac-\frac{a+b}{c+d}=\frac1{(c+d)c}.
\end{equation}
\end{proof}
\subsection{Left/Right convergents}
For any pair $(n,s)\in\omega\times\sn$, we associate  a matrix $g_{n,s}\in\operatorname{SL}_2\!\Z$ as follows
\begin{equation}
g_{n,s}=
\begin{pmatrix}
 a_{n,s}&b_{n,s}\\
 c_{n,s}&d_{n,s}
\end{pmatrix}=
\sfv(s_0)\sfv(s_1)\cdots\sfv(s_n),\quad \sfv(m):=
\begin{pmatrix}
 m&-1\\
 1&0
\end{pmatrix}.
\end{equation}
The matrix $\sfv(2)$ is distinguished by the fact that it is parabolic so that there is an explicit formula for its $k$-th power
\begin{equation}\label{v2k}
\sfv(2)^k=\begin{pmatrix}
 1+k&-k\\
 k&1-k
\end{pmatrix}\quad\forall  k\in\Z.
\end{equation}
From the definition of the subtraction continued fraction of a real number $x\in\R$, it follows that
\begin{equation}\label{x=gnx(x)}
x=g_{n, \imath x}(r_{n+1}x)\Leftrightarrow r_{n+1}x=g_{n, \imath x}^{-1}(x)\quad \forall n\in\omega.
\end{equation}
Indeed, we have
\begin{multline}
x=\langle  x_0 ,r_1x\rangle= x_0-\frac1{r_1x}
=\sfv( x_0)(r_1x)\\=\sfv( x_0)(\sfv( x_1)(r_2x))=\cdots=g_{n, \imath x}(r_{n+1}x).
\end{multline}

\begin{definition}
Let $s\in\sn$. For any $n\in\omega$,  the $n$-th \emph{\color{blue} (right) convergent} of $s$ is the rational number $R_n(s):=\frac{a_{n,s}}{c_{n,s}}$, the $n$-th \emph{\color{blue} left convergent} of $s$ is the rational number $L_n(s):=\frac{a_{n,x}+b_{n,s}}{c_{n,x}+d_{n,s}}$ and the  $n$-th \emph{\color{blue} accuracy} of $s$ is the integer $A_n(s):=(c_{n,s}+d_{n,s})c_{n,s}$. The denominators in these definitions are non zero due to Theorem~\ref{theorem1}, see below.
\end{definition}

\begin{remark}\label{rem-rec-rel}
 We have the relations
 \begin{equation}\label{recrelLR}
R_{n}(s)=g_{n-1,s}(s_{n}),\quad L_{n}(s)=g_{n-1,s}(s_{n}-1) \quad \forall n\in\omega
\end{equation}
with the convention $g_{-1,s}=\left(
\begin{smallmatrix}
 1&0\\0&1
\end{smallmatrix}\right)$. Indeed, denoting $g_{n-1,s}=:g=\left(
\begin{smallmatrix}
 a&b\\ c&d
\end{smallmatrix}\right)$, we have
\begin{equation}
g_{n,s}=g\sfv(s_n)=
\begin{pmatrix}
 as_n+b&-a\\ cs_n+d&-c
\end{pmatrix}\Rightarrow R_n(s)=g(s_n),\quad L_n(s)=g(s_n-1).
\end{equation}

As a consequence, an s-number $s$ can be recovered from the sequence of its right or left convergents by simple recursive procedures:
 \begin{equation}
s_{n}=g_{n-1,s}^{-1}(R_{n}(s))=1+g_{n-1,s}^{-1}(L_{n}(s))\quad \forall n\in\omega.
\end{equation}
\end{remark}

\begin{theorem}\label{theorem1}
 Let $s\in\sn$. For any $n\in\omega$, one has the inequalities 
\begin{equation}\label{ineqs}
c_{n+1,s}>c_{n,s}>0,\quad   c_{n+1,s}+ d_{n+1,s}\ge c_{n,s}+ d_{n,s}>0,\quad  d_{n,s}\le0
\end{equation}
and the equality
\begin{equation}\label{Rn-Ln=1/A}
R_n(s)-L_n(s)=\frac1{A_n(s)}.
\end{equation}
 The sequence of accuracies  $(A_n(s))_{n\ge0}$ is a strictly increasing sequence of positive integers. The  sequence of right convergents $(R_n(s))_{n\ge0}$ is a strictly decreasing sequence of rationals bounded from below.  
The sequence of left convergents $(L_n(s))_{n\ge0}$ is a non-strictly increasing sequence of rationals bounded from above. 
\end{theorem}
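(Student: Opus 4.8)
The plan is to run everything off a single induction on $n$ establishing the three inequalities in~\eqref{ineqs}, and then to read the statements about convergents and accuracies as direct consequences, with Lemma~\ref{g(t)} doing the work for the convergent part.

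First I would set up the recursion at the matrix level. Writing $g_{n,s}=\left(\begin{smallmatrix} a&b\\ c&d\end{smallmatrix}\right)$ and using $g_{n+1,s}=g_{n,s}\sfv(s_{n+1})$ together with the definition of $\sfv$, one gets
\[
g_{n+1,s}=\begin{pmatrix} as_{n+1}+b & -a\\ cs_{n+1}+d & -c\end{pmatrix},
\]
so $c_{n+1,s}=cs_{n+1}+d$ and $d_{n+1,s}=-c$. The base case is $g_{0,s}=\sfv(s_0)$, for which $(c_{0,s},d_{0,s})=(1,0)$, giving $c_{0,s}>0$, $c_{0,s}+d_{0,s}=1>0$, $d_{0,s}=0\le0$ (note $s_0$ is irrelevant here). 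For the inductive step, assume $c>0$, $c+d>0$, $d\le0$; since $s_{n+1}\ge2$, hence $s_{n+1}-1\ge1$, I would compute
\[
c_{n+1,s}=cs_{n+1}+d\ge 2c+d=c+(c+d)>c>0,\qquad c_{n+1,s}+d_{n+1,s}=c(s_{n+1}-1)+d\ge c+d>0,
\]
and $d_{n+1,s}=-c<0\le0$. This simultaneously reproduces the hypothesis at $n+1$ and yields the monotonicity part of~\eqref{ineqs}, namely $c_{n+1,s}>c_{n,s}>0$ and $c_{n+1,s}+d_{n+1,s}\ge c_{n,s}+d_{n,s}>0$. In particular all denominators $c_{n,s}$ and $c_{n,s}+d_{n,s}$ are positive integers, so $R_n(s),L_n(s),A_n(s)$ are well defined, and~\eqref{Rn-Ln=1/A} follows from the direct computation
\[
R_n(s)-L_n(s)=\frac{a_{n,s}}{c_{n,s}}-\frac{a_{n,s}+b_{n,s}}{c_{n,s}+d_{n,s}}=\frac{a_{n,s}d_{n,s}-b_{n,s}c_{n,s}}{(c_{n,s}+d_{n,s})c_{n,s}}=\frac{\det g_{n,s}}{A_n(s)}=\frac1{A_n(s)},
\]
since $\det g_{n,s}=\det\sfv(s_0)\cdots\det\sfv(s_n)=1$.

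For the accuracies, \eqref{ineqs} exhibits $A_n(s)=(c_{n,s}+d_{n,s})c_{n,s}$ as a product of two positive integers, hence a positive integer, and strict monotonicity of $(A_n(s))$ is immediate because $c_{n,s}$ strictly increases while $c_{n,s}+d_{n,s}$ does not decrease, both staying positive. For the convergents I would apply Lemma~\ref{g(t)} to $g=g_{n,s}$: the hypotheses hold since $g_{n,s}\in\operatorname{SL}_2\Z\subset\operatorname{SL}_2\R$ and $c_{n,s}>0$, $c_{n,s}+d_{n,s}>0$ are exactly~\eqref{abcd-ineqs}. Hence for every $t\ge1$ the map $g_{n,s}(t)$ is strictly growing with $L_n(s)=g_{n,s}(1)\le g_{n,s}(t)<a_{n,s}/c_{n,s}=R_n(s)$. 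By the recurrence~\eqref{recrelLR} in Remark~\ref{rem-rec-rel}, $R_{n+1}(s)=g_{n,s}(s_{n+1})$ and $L_{n+1}(s)=g_{n,s}(s_{n+1}-1)$, and since $1\le s_{n+1}-1<s_{n+1}<\infty$ this gives $L_n(s)\le L_{n+1}(s)<R_{n+1}(s)<R_n(s)$. Thus $(R_n(s))$ is strictly decreasing, $(L_n(s))$ is non-strictly increasing, and chaining the inequalities yields $L_0(s)\le L_n(s)<R_n(s)\le R_0(s)$ for all $n$, so $(R_n(s))$ is bounded below by $L_0(s)$ and $(L_n(s))$ is bounded above by $R_0(s)$.

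I do not anticipate a genuine obstacle; the only point requiring care is arranging the induction so the three inequalities feed one another — in particular noticing that $d_{n+1,s}=-c_{n,s}$ makes ``$d\le0$'' automatic once $c_{n,s}>0$, and that the bound $s_{n+1}-1\ge1$, forced by the defining constraint $s_n\ge2$ for $n>0$, is precisely what keeps $c_{n+1,s}+d_{n+1,s}\ge c_{n,s}+d_{n,s}$. Everything else is bookkeeping, and the passage from the matrix inequalities to the statements about $R_n,L_n,A_n$ is immediate once Lemma~\ref{g(t)} and Remark~\ref{rem-rec-rel} are invoked.
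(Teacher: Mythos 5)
Your proposal is correct and follows essentially the same route as the paper: the same induction on the entries of $g_{n,s}$ via $g_{n+1,s}=g_{n,s}\sfv(s_{n+1})$, the same determinant identity for $R_n(s)-L_n(s)=1/A_n(s)$, and the same use of Lemma~\ref{g(t)} together with the relations $R_{n+1}(s)=g_{n,s}(s_{n+1})$, $L_{n+1}(s)=g_{n,s}(s_{n+1}-1)$ for the monotonicity and boundedness claims. The only (immaterial) difference is that you obtain the bounds by chaining $L_0(s)\le L_n(s)<R_n(s)\le R_0(s)$ directly, whereas the paper routes this through the accuracy identity.
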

\begin{proof}
 Inequalities~\eqref{ineqs} are verified by induction on $n$. 
 
 For $n=0$, we have the inequalities
 \begin{equation}
c_{0,s}=1>0,\quad c_{0,s}+d_{0,s}=1>0,\quad  d_{0,s}=0\le0.
\end{equation}
Let $n\in\omega$ and suppose that the matrix $g_{n,s}=:g=\left(
\begin{smallmatrix}
 a&b\\
 c&d
\end{smallmatrix}\right)\in \operatorname{SL}_2\!\Z$ satisfies inequalities~\eqref{abcd-ineqs} and $d\le0$.
Then, with $m:=s_{n+1}\ge 2$, we have
\begin{equation}\label{gVm}
g_{n+1,s}=g \sfv(m)=\begin{pmatrix}
 a& b\\
 c& d
\end{pmatrix}\begin{pmatrix}
m& -1\\
1& 0
\end{pmatrix}=
\begin{pmatrix}
 ma+b& -a\\
 mc+d& -c
\end{pmatrix}=:\begin{pmatrix}
 a'& b'\\
 c'& d'
\end{pmatrix}
\end{equation}
so that
\begin{multline}
c'=mc+d=c+\underbrace{(m-2)c}_{\ge0}+\underbrace{c+d}_{>0}>c>0,\\
c'+d'=(m-1)c+d=c+d+\underbrace{(m-2)c}_{\ge0}\ge c+d>0,\quad
d'=-c< 0\le0,\quad 
\end{multline}
thus completing the induction on $n$.

Equality~\eqref{Rn-Ln=1/A} is the identity 
\begin{equation}
\frac ac-\frac{a+b}{c+d}=\frac1{(c+d)c}
\end{equation}
valid for any matrix $\left(
\begin{smallmatrix}
 a&b\\
 c&d
\end{smallmatrix}\right)\in \operatorname{SL}_2\!\R$ if the denominators are non zero. 

The strict increasing property of the sequence $(A_n(s))_{n\ge0}$ directly follows from inequalities~\eqref{ineqs}.
 
 The strict decreasing property of the sequence $(R_n(s))_{n\ge0}$ is a consequence of Lemma~\ref{g(t)} and the definitions of $a'$ and $c'$ in~\eqref{gVm}:
\begin{equation}
R_{n+1}(s)=\frac{a'}{c'}=g(m)<\lim_{t\to+\infty}g(t)=\frac{a}{c}=R_{n}(s).
\end{equation}
Likewise, the non-strict increasing property of the sequence  $(L_n(s))_{n\ge0}$ is a consequence of Lemma~\ref{g(t)}:
\begin{equation}
L_{n+1}(s)=\frac{a'+b'}{c'+d'}=g(m-1)\ge g(1)=\frac{a+b}{c+d}=L_{n}(s)
\end{equation}
with the notation in~\eqref{gVm}.

Together with the increasing property of $(L_n(s))_{n\ge0}$, the decreasing property of  $(R_n(s))_{n\ge0}$ and the strict positivity of the accuracies, equalities~\eqref{Rn-Ln=1/A} imply that
\begin{equation}
R_n(s)=L_n(s)+\frac1{A_n(s)}> L_0(s),\quad L_n(s)=R_n(s)-\frac1{A_n(s)}<R_0(s),
\end{equation}
that is $(R_n(s))_{n\ge0}$ is bounded from below, and $(L_n(s))_{n\ge0}$ is bounded from above.

\end{proof}
\begin{remark}\label{rem-Rns-full-info}
 For any s-number $s$ and any index $n\in\omega$, the $n$-th right convergent $R_n(s)$ contains the full information about the matrix $g_{n,s}$ and, in particular, the $n$-th left convergent and the $n$-th accuracy. Indeed, the integers $a_{n,s}$ and $c_{n,s}$ are mutually prime with $c_{n,s}>0$, so that they can be extracted from the rational number $R_n(s)$  as  the signed numerator and the positive denominator respectively. Then, $b_{n,s}$ and $d_{n,s}$ are uniquely determined from the determinant condition
 \begin{equation}
a_{n,s}d_{n,s}-b_{n,s}c_{n,s}=1
\end{equation}
and the inequalities $-c_{n,s}<d_{n,s}\le 0$. 
\end{remark}
\begin{example}[The right convergents of an integer]
 Let us calculate the right convergents for an integer $m\in\Z$ given by the sequence $\langle m+1 ,1\rangle$.
 By using the explicit formula~\eqref{v2k}, the associated matrices read
 \begin{multline}
g_{n,m}=\sfv(m+1)\sfv(2)^n=\begin{pmatrix}
 m+1&-1\\
 1&0
\end{pmatrix}\begin{pmatrix}
 1+n&-n\\
 n&1-n
\end{pmatrix}\\
=\begin{pmatrix}
(n+1) m+1&-mn-1\\
 n+1&-n
\end{pmatrix}
\end{multline}
so that
\begin{equation}
R_n(m)=m+\frac1{n+1} \quad \forall n\in\omega.
\end{equation}
\end{example}
\begin{example}[The right convergents of $\frac12$] We have the sequence $\frac12=\langle 1 ,3 ,1\rangle$ so that
\begin{equation}
g_{0,\frac12}=\sfv(1)=
\begin{pmatrix}
 1&-1\\
 1&0
\end{pmatrix}\Rightarrow R_0(1/2)=1,
\end{equation}
\begin{equation}
g_{1,\frac12}=\sfv(1)\sfv(3)=\begin{pmatrix}
 2&-1\\
 3&1
\end{pmatrix}\Rightarrow R_1(1/2)=\frac 23,
\end{equation}
and, for any $n>0$,
\begin{equation}
g_{n,\frac12}=g_{1,\frac12}\sfv(2)^{n-1}=
\begin{pmatrix}
 1+n&-n\\
 1+2n&1-2n
\end{pmatrix}\Rightarrow  R_{n}(1/2)=\frac{1+n}{1+2n}
\end{equation}
which also reproduces the correct answer for $n=0$.
 
\end{example}
\subsection{Invertibility of the subtraction continued fraction} Given an s-number $s\in\sn$,
Theorem~\ref{theorem1} implies that the sequences of left and right convergents of $s$ converge to one and the same real number denoted $\jmath s$.

\begin{theorem}\label{theorem2}
The map $\jmath\colon\sn\to\R$ defined by
\begin{equation}
\jmath(s)=\jmath s:=\lim_{n\to\infty}R_n(s)
\end{equation}
 is the inverse of $ \imath\colon\R\to\sn$.
\end{theorem}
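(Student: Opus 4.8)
The plan is to verify separately that $\jmath\circ\imath=\mathrm{id}_\R$ and $\imath\circ\jmath=\mathrm{id}_\sn$.

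\emph{For $\jmath\circ\imath=\mathrm{id}_\R$,} fix $x\in\R$ and put $s:=\imath x$. By~\eqref{x=gnx(x)} we have $x=g_{n,s}(r_{n+1}x)$ with $r_{n+1}x\ge1$, and by Theorem~\ref{theorem1} the matrix $g_{n,s}$ satisfies the hypotheses~\eqref{abcd-ineqs} of Lemma~\ref{g(t)}. Applying that lemma with $t=r_{n+1}x$ yields $0<R_n(s)-x\le 1/A_n(s)$. Since $(A_n(s))_{n\ge0}$ is a strictly increasing sequence of positive integers, $1/A_n(s)\to0$, so $R_n(s)\to x$; that is, $\jmath(\imath x)=x$.

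\emph{For $\imath\circ\jmath=\mathrm{id}_\sn$} I would argue by induction on $n$ that for every $s\in\sn$ the quotients of $\imath(\jmath s)$ and of $s$ of index $\le n$ coincide. The base case $n=0$: with $x:=\jmath s$, Theorem~\ref{theorem1} (together with the paragraph preceding the present theorem) gives that the left convergents increase to $x$ while the right convergents strictly decrease to $x$, so $L_0(s)\le x<R_0(s)$; as $g_{0,s}=\sfv(s_0)$ this reads $s_0-1\le x<s_0$, i.e. $1\ge s_0-x>0$, which is exactly the relation defining $x_0=(\imath x)_0$. Hence $(\imath(\jmath s))_0=s_0$ for all $s$. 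For the inductive step, fix $s$ and put $x:=\jmath s$; by the inductive hypothesis applied to $s$ we have $x_i=s_i$ for $i\le n$, hence $g_{n,\imath x}=g_{n,s}$ and~\eqref{x=gnx(x)} gives $r_{n+1}x=g_{n,s}^{-1}(x)$. The key point is the identity $r_{n+1}x=\jmath(\rem_{n+1}s)$: from the factorization $g_{n+1+k,s}=g_{n,s}\,g_{k,\rem_{n+1}s}$ one obtains, evaluating the M\"obius actions at $\infty$, the relation $R_{n+1+k}(s)=g_{n,s}\!\left(R_k(\rem_{n+1}s)\right)$; all arguments here lie in $[1,\infty)$, which avoids the nonpositive pole $-d_{n,s}/c_{n,s}$ of $g_{n,s}$, so $g_{n,s}$ is continuous on the relevant region and letting $k\to\infty$ gives $x=g_{n,s}(\jmath(\rem_{n+1}s))$, whence the identity. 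Then, using $\rem_{n+1}\circ\imath=\imath\circ r_{n+1}$, this identity, and the already established case $n=0$ applied to the s-number $\rem_{n+1}s$,
\[
(\imath(\jmath s))_{n+1}=(\rem_{n+1}(\imath x))_0=(\imath(r_{n+1}x))_0=\bigl(\imath(\jmath(\rem_{n+1}s))\bigr)_0=(\rem_{n+1}s)_0=s_{n+1},
\]
which closes the induction and gives $\imath(\jmath s)=s$.

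\emph{The main obstacle} is the identity $r_{n+1}x=\jmath(\rem_{n+1}s)$: it combines the cocycle relation for the matrices $g_{n,s}$ with a continuity argument for $g_{n,s}$ as a M\"obius transformation, and one must keep track of the fact that all the points involved stay in the region $t\ge1$, where $g_{n,s}$ has no pole. (Note that $\jmath$ of any s-number is $\ge1$, since for a remnant $s'$ one has $\jmath s'\ge L_0(s')=s'_0-1\ge1$.) Once this is in place, the remainder is routine bookkeeping with remnants.
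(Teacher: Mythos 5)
Your proof is correct. The first half ($\jmath\circ\imath=\operatorname{id}_{\R}$) coincides with the paper's argument: Lemma~\ref{g(t)} applied to $g_{n,\imath x}$ and $t=r_{n+1}x$ via~\eqref{x=gnx(x)}, plus the divergence of the accuracies. The second half, however, takes a genuinely different route in the inductive step. The paper keeps both s-numbers $s$ and $s':=\imath(\jmath s)$ in play and, assuming $g_{k,s}=g_{k,s'}=:g$, uses the two-sided bracketing $L_{k+1}\le x<R_{k+1}$ for \emph{both} sequences together with~\eqref{recrelLR} and the monotonicity of $g$ to pin down $s_{k+1}=s'_{k+1}=(g^{-1}(x))_0$ directly; no limits or continuity are needed beyond Theorem~\ref{theorem1}. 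You instead isolate the intertwining identity $r_{n+1}(\jmath s)=\jmath(\rem_{n+1}s)$, proved from the cocycle relation $g_{n+1+k,s}=g_{n,s}\,g_{k,\rem_{n+1}s}$ and continuity of the M\"obius map on $[1,\infty)$, and then reduce the $(n+1)$-th quotient to the already-established base case for the shifted s-number $\rem_{n+1}s$. Your version costs a small analytic argument (passing the limit through $g_{n,s}$) but buys a structurally meaningful statement --- that $\jmath$ intertwines the real remnants $r_n$ with the sequence remnants $\rem_n$, mirroring the relation $\rem_n\circ\imath=\imath\circ r_n$ already in the paper --- which the paper's proof never makes explicit. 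One small slip: the pole of $g_{n,s}(t)=\frac{a t+b}{c t+d}$ is at $t=-d_{n,s}/c_{n,s}$, which by $d_{n,s}\le0<c_{n,s}$ is \emph{non-negative}, not non-positive as you wrote; what saves the argument is that $c_{n,s}+d_{n,s}>0$ forces this pole to lie in $[0,1)$, strictly below the region $t\ge1$ where all your arguments live, so the continuity claim stands.
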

\begin{proof}
 {\color{purple}($\jmath\circ \imath=\operatorname{id}_{\R}$)} 
 For any real number $x\in\R$,  the sequences of left and right convergents of $\imath x$ satisfy the inequalities
 \begin{equation}\label{C_n(x)-xineqs}
0<R_n( \imath x)-x\le \frac1{A_{n}( \imath x)}\quad\forall n\in\omega
\end{equation}
and
\begin{equation}\label{L_n(x)-xineqs}
0\le x-L_n( \imath x)<\frac1{A_{n}( \imath x)}\quad\forall n\in\omega.
\end{equation}
which follow from Lemma~\ref{g(t)} with $g=g_{n, \imath x}$ and $t=r_{n+1}x$ and by taking into account equality~\eqref{x=gnx(x)}. 
The fact that both sequences converge to $x$ follows from these inequalities and Theorem~\ref{theorem1}: the sequence of accuracies $(A_n( \imath x))_{n\ge0}$ is a strictly increasing sequence of positive integers so that the sequence of their reciprocals converges to zero.

 {\color{purple}($\imath\circ\jmath=\operatorname{id}_{\sn}$)}  
Given $s\in\sn$, denote $x:=\jmath s$ and $s':= \imath x$. By Theorem~\ref{theorem1}, we have two families of inequalities
\begin{equation}\label{LxRp}
L_n(s)\le x<R_n(s),\quad L_n(s')\le x<R_n(s')\quad \forall n\in\omega
\end{equation}
which for $n=0$ reduce to
 \begin{equation}
s_0-1\le x<s_0,\quad s'_0-1\le x<s'_0\Leftrightarrow s_0=s'_0=x_0.
\end{equation}
Proceeding by induction, assume that for $k\in\omega$, we have 
\begin{equation}
s_i=s'_i\quad \forall i\le k.
\end{equation}
Then, we have $g_{k,s}=g_{k,s'}=:g$. Denoting $m:=s_{k+1}$ and  $m':=s'_{k+1}$ and taking into account Lemma~\ref{g(t)} and the relations for the convergents in~\eqref{recrelLR}, inequalities~\eqref{LxRp} for $n=k+1$ take the form
\begin{multline}
g(m-1)\le x<g(m),\quad g(m'-1)\le x<g(m')\\
\Leftrightarrow m-1\le g^{-1}(x)<m,\quad m'-1\le g^{-1}(x)<m'
\Leftrightarrow m=m'=(g^{-1}(x))_0.
\end{multline}
We conclude that $s=s'$.
\end{proof}
\subsection{Orders in $\R$ and $\sn$}
Let us identify $\R$ and $\sn$ through the subtraction continued fraction.
\begin{theorem}\label{theorem3}
The order of $\R$  corresponds to the lexicographic order of $\sn$. This means that $x<y$ if and only if there exists $N\in\omega$ such that $x_N<y_N$ and $x_n=y_n$ for all $n<N$.
 \end{theorem}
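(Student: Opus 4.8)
The plan is to isolate a single implication and then obtain the full equivalence for free. The implication is: \emph{if $s:=\imath x$ and $t:=\imath y$ satisfy $x_n=y_n$ for all $n<N$ and $x_N<y_N$ for some $N\in\omega$, then $x<y$.} Granting this, the theorem follows formally. Since $\imath$ is a bijection (Theorem~\ref{theorem2}), $x\ne y$ forces $\imath x\ne\imath y$, so the set $\{n:x_n\ne y_n\}$ is non-empty; let $N$ be its least element, so that $x_n=y_n$ for $n<N$. If $x<y$ then, were $x_N>y_N$, the implication applied with $x$ and $y$ interchanged would give $y<x$, a contradiction; hence $x_N<y_N$. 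This proves the ``only if'' direction, and the ``if'' direction is exactly the implication.

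To prove the implication I would argue as follows. Because $g_{n,s}$ depends only on $s_0,\dots,s_n$, the hypothesis $x_n=y_n$ for $n<N$ yields $g_{N-1,s}=g_{N-1,t}=:g$, with the convention $g_{-1,\cdot}=\left(\begin{smallmatrix}1&0\\0&1\end{smallmatrix}\right)$ of Remark~\ref{rem-rec-rel}. For $N\ge1$, the induction carried out in the proof of Theorem~\ref{theorem1} shows that $g=\left(\begin{smallmatrix}a&b\\c&d\end{smallmatrix}\right)$ satisfies $c>0$ and $c+d>0$, so Lemma~\ref{g(t)} applies and $g$ is strictly increasing on $[1,\infty)$; moreover $x_N\ge1$ and $y_N-1\ge1$, since all partial quotients past the first are $\ge2$. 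For $N=0$, the matrix $g$ is the identity, which is increasing on all of $\R$. In either case, using the relations~\eqref{recrelLR} together with $x_N\le y_N-1$ (distinct integers),
\begin{equation}
R_N(s)=g(x_N)\le g(y_N-1)=L_N(t).
\end{equation}
Combining this with the bracketing inequalities established in the proof of Theorem~\ref{theorem2} --- namely $x<R_N(\imath x)$ from~\eqref{C_n(x)-xineqs} and $L_N(\imath y)\le y$ from~\eqref{L_n(x)-xineqs} --- we obtain
\begin{equation}
x<R_N(s)\le L_N(t)\le y,
\end{equation}
as desired.

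I do not anticipate a serious obstacle. The only delicate point is the base case $N=0$, where $g$ is the identity and the arguments $x_0$ and $y_0-1$ need not lie in $[1,\infty)$, so Lemma~\ref{g(t)} does not literally apply --- but the identity map is monotone everywhere, so the same chain of inequalities goes through verbatim. Everything else is a direct assembly of Lemma~\ref{g(t)}, Theorems~\ref{theorem1} and~\ref{theorem2}, and the convergent recursions in Remark~\ref{rem-rec-rel}.
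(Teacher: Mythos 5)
Your proof is correct, but it packages the argument differently from the paper, so a comparison is worth making. The paper works directly with the remnants: writing $x=g\bigl(x_N-\tfrac1{r_{N+1}x}\bigr)$ and $y=g\bigl(y_N-\tfrac1{r_{N+1}y}\bigr)$ with $g=g_{N-1,x}=g_{N-1,y}$, it uses the strict monotonicity of $g$ from Lemma~\ref{g(t)} to reduce $x<y$ to an inequality between the arguments, and then exploits the bounds $0<\tfrac1{r_{N+1}x},\tfrac1{r_{N+1}y}\le1$ to show that the integer $m=y_N-x_N$ satisfies $x<y\Leftrightarrow m>-1\Leftrightarrow m>0$; this yields both directions of the equivalence in a single chain. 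You instead prove only the implication ``first disagreement $x_N<y_N$ implies $x<y$'' and recover the converse by trichotomy and the injectivity of $\imath$ from Theorem~\ref{theorem2} --- a clean logical reduction the paper does not bother to make explicit because its chain of equivalences renders it unnecessary. Your analytic step is also different in flavor: rather than bounding the remnants afresh, you reuse the interval bracketing $L_N(\imath x)\le x<R_N(\imath x)$ already established in the proof of Theorem~\ref{theorem2}, together with the interlacing $R_N(s)=g(x_N)\le g(y_N-1)=L_N(t)$ from~\eqref{recrelLR} and Lemma~\ref{g(t)}, to conclude $x<R_N(s)\le L_N(t)\le y$. This buys a proof that recycles previously established inequalities instead of redoing estimates, and it makes transparent the geometric picture that the $N$-th quotient selects one of a family of ordered, disjoint half-open intervals $[L_N,R_N)$; the paper's version is more self-contained at this step and delivers the biconditional in one pass. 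You are also right to flag $N=0$ as the one place Lemma~\ref{g(t)} does not literally apply, and your fix (the identity matrix is monotone everywhere, and $R_0(s)=x_0\le y_0-1=L_0(t)$) is exactly what is needed; the paper handles the same base case implicitly through its convention $g_{-1,z}=\left(\begin{smallmatrix}1&0\\0&1\end{smallmatrix}\right)$.
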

\begin{proof} Assuming $x\ne y$, let $N\in\omega$ be such that $x_N\ne y_N$ and $x_n=y_n$ for all $n<N$.
 We have
\begin{equation}\label{x=gVy=gV}
x=g\sfv(x_N)(r_{N+1}x),\quad y=g\sfv(y_N)(r_{N+1}y),\quad g:=g_{N-1,x}=g_{N-1,y}
\end{equation}
 with the convention $g_{-1,z}=\left(
\begin{smallmatrix}
 1&0\\
 0&1
\end{smallmatrix}\right)
$ for any $z\in\R$.  By using the action 
\begin{equation}
\sfv(s)(t)=\frac{s t-1}{t}=s-\frac1t, 
\end{equation}
we rewrite~\eqref{x=gVy=gV}
\begin{equation}
x=g\Big(x_N-\frac1{r_{N+1}x}\Big),\quad  y=g\Big(y_N-\frac1{r_{N+1}y}\Big).
\end{equation}
The fact that, due to Lemma~\ref{g(t)}, the function $g(t)$ is strictly growing for $t\ge1$, we obtain the equivalences
\begin{equation}
x<y\Leftrightarrow x_N-\frac1{r_{N+1}x}<y_N-\frac1{r_{N+1}y}\Leftrightarrow \frac1{r_{N+1}y}-\frac1{r_{N+1}x}<m
\end{equation}
where $m:=y_N-x_N\in\Z_{\ne0}$.
Complemented with the implication
\begin{equation}
 0<\frac1{r_{N+1}y}\le 1,\quad -1\le -\frac1{r_{N+1}x}<0\Rightarrow -1<\frac1{r_{N+1}y}-\frac1{r_{N+1}x}<1,
\end{equation}
we obtain the equivalence
\begin{equation}
x<y\Leftrightarrow -1<m.
\end{equation}
Now, a non zero integer $m$ which is strictly greater than $-1$ is a strictly positive integer. Thus,
\begin{equation}
x<y\Leftrightarrow 0<m=y_N-x_N\Leftrightarrow x_N< y_N.
\end{equation}
\end{proof}

\section{Relating simple and subtraction continued fractions}\label{final-sec3}
\subsection{Simple continued fractions}
Simple continued fractions are based on the matrix valued function
\begin{equation}
\sfu\colon \Z\to \operatorname{GL}_2\!\Z,\quad \sfu(m)=\begin{pmatrix}
 m&1\\
 1&0
\end{pmatrix}
\end{equation}
in the same way as subtraction continued fractions are based on the matrix valued function
\begin{equation}
\sfv\colon \Z\to \operatorname{SL}_2\!\Z,\quad\sfv(m)=\begin{pmatrix}
 m&-1\\
 1&0
\end{pmatrix}
\end{equation}
as is described in Section~\ref{sec:scf}.
Indeed, the  simple continued fraction of an irrational number $x\in\R$ is the expression~\cite{MR0146146}
\begin{equation}
x=[\dot x_0 ,\dot x_1,\dot x_2,\dots]:=
\dot x_0+\frac1{\dot x_1+\frac1{\dot x_2+\frac1{\cdots}}},
\quad \dot x_i\in\Z\quad\text{and}\quad 
   \dot x_i\ge1\quad \text{if}\quad i>0,
\end{equation}
where we put a dot over $x_i$ in order to distinguish it from the $x_i$ in the subtraction continued fraction.
Similarly to the equalities~\eqref{cuts} and \eqref{x=gnx(x)}, we have
\begin{equation}\label{x=dotgnx(x)}
x=[\dot x_0 ,\dot x_1,\dots, \dot x_n ,\dot r_{n+1}x]=\dot g_{n,x}(\dot r_{n+1}x)\quad \forall n\in\omega
\end{equation}
where $\dot r_kx\in\R_{>1}$ for all $k>0$ and
 \begin{equation}
\dot g_{n,x}:=\sfu(\dot x_0)\sfu(\dot x_1)\cdots \sfu(\dot x_n).
\end{equation}
\subsection {Some identities in $\operatorname{GL}_2\Z$}
 A number of algebraic properties of the maps $\sfu$ and $\sfv$ allow us to relate the two types of continued fractions.
\begin{lemma}
One has the following identities
\begin{equation}\label{uuu=u}
\sfu(x)\sfu(0)\sfu(y)
=\sfu(x+y)
\end{equation}
\begin{equation}\label{uu=vuu}
\sfu(x)\sfu(y)
=\sfv(x+1)
\sfu(1)\sfu(y-1),
\end{equation}
\begin{equation}\label{vv=uuv}
\sfv(x)\sfv(y)
=\sfu(x-1)
\sfu(1)\sfv(y-1),
\end{equation}

\end{lemma}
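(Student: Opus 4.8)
The plan is to prove all three identities by straightforward multiplication of the explicit $2\times2$ matrices: both $\sfu$ and $\sfv$ are given in closed form, and each claimed identity is polynomial of degree one in each of the integer variables $x,y$, so the verification is pure bookkeeping. The only organizing device worth using is to choose the order in which the three-fold products on the right-hand sides are evaluated so that the arithmetic stays linear.

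For \eqref{uuu=u} I would first note that $\sfu(0)=\left(\begin{smallmatrix}0&1\\1&0\end{smallmatrix}\right)$ is the coordinate swap, so that $\sfu(x)\sfu(0)=\left(\begin{smallmatrix}1&x\\0&1\end{smallmatrix}\right)$ is unipotent upper-triangular; right-multiplying by $\sfu(y)$ then merely adds $y$ to the $(1,1)$-entry and leaves the shape, giving $\sfu(x+y)$. Equivalently one multiplies the triple product out in one step and reads off the four entries.

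For \eqref{uu=vuu} and \eqref{vv=uuv} I would compute each side once and compare. The left-hand sides are $\sfu(x)\sfu(y)=\left(\begin{smallmatrix}xy+1&x\\y&1\end{smallmatrix}\right)$ and $\sfv(x)\sfv(y)=\left(\begin{smallmatrix}xy-1&-x\\y&-1\end{smallmatrix}\right)$. On the right-hand sides I would first contract the two leftmost factors: $\sfv(x+1)\sfu(1)=\left(\begin{smallmatrix}x&x+1\\1&1\end{smallmatrix}\right)$ and $\sfu(x-1)\sfu(1)=\left(\begin{smallmatrix}x&x-1\\1&1\end{smallmatrix}\right)$, and then multiply these on the right by $\sfu(y-1)$, respectively $\sfv(y-1)$. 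In both cases the $(1,1)$-entry becomes $x(y-1)+(x\pm1)=xy\pm1$ while the remaining three entries are immediate, and the result agrees with the corresponding left-hand side entry by entry.

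I do not expect any genuine obstacle: each identity is a couple of lines of computation. The single point requiring care is sign bookkeeping, since $\sfv$ carries $-1$ in the $(1,2)$ slot where $\sfu$ carries $+1$, and the three identities differ precisely in how those signs propagate through the products. A tidier conceptual route organizes everything around the relation $\sfv(m)=\sfu(m)\,\mathrm{diag}(1,-1)$, which exhibits \eqref{vv=uuv} as a sign-twisted companion of \eqref{uu=vuu}; I would mention this, but for matrices this small the direct check is the shortest and I would simply carry it out.
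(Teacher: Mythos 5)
Your computations are correct: all the intermediate matrices you write down ($\sfu(x)\sfu(0)=\left(\begin{smallmatrix}1&x\\0&1\end{smallmatrix}\right)$, $\sfv(x+1)\sfu(1)=\left(\begin{smallmatrix}x&x+1\\1&1\end{smallmatrix}\right)$, $\sfu(x-1)\sfu(1)=\left(\begin{smallmatrix}x&x-1\\1&1\end{smallmatrix}\right)$, and the two left-hand sides) check out, and the final products match entry by entry, so the proof is complete. The paper handles \eqref{uuu=u} exactly as you do, by direct calculation, and explicitly concedes that the other two identities ``can also be verified by direct calculations''; it nevertheless chooses a different route for \eqref{uu=vuu} and \eqref{vv=uuv}, introducing the order-two matrix $\sfk=\left(\begin{smallmatrix}1&0\\1&-1\end{smallmatrix}\right)$ and factoring each identity through the three one-step relations $\sfu(x)\sfk=\sfv(x+1)$, $\sfk\sfu(x)=\sfu(1)\sfu(x-1)$, $\sfk\sfv(x)=\sfu(1)\sfv(x-1)$, inserting $\sfk^2=I$ between the two factors on the left. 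What the paper's decomposition buys is a reusable mechanism: the same insertion of $\sfk$ drives the recursive conversion between simple and subtraction continued fractions in Section~4, so the lemma's proof doubles as an explanation of why those recursions work. Your direct check is shorter and fully rigorous; your closing remark that $\sfv(m)=\sfu(m)\operatorname{diag}(1,-1)$ is a correct identity but plays the role of a different (diagonal) twist than the paper's $\sfk$, which is lower-triangular and chosen precisely so that it shifts arguments by $\pm1$ when commuted past $\sfu$ and $\sfv$.
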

\begin{proof}
Relation~\eqref{uuu=u} is verified by a direct calculation. 
Although the two other relations can also be verified by direct calculations, it is instructive 
to introduce an auxiliary matrix of order $2$
\begin{equation}
\sfk:=\begin{pmatrix}
 1&0\\
 1&-1
\end{pmatrix}, \quad \sfk^2=\begin{pmatrix}
 1&0\\
 0&1
\end{pmatrix},
\end{equation}
which satisfies the easily verifiable relations
 \begin{equation}\label{uk=v}
\sfu(x)\sfk=\sfv(x+1),
\end{equation}
\begin{equation}\label{ku=uu}
\sfk\sfu(x)=\sfu(1)\sfu(x-1),
\end{equation}
\begin{equation}\label{kv=uv}
\sfk\sfv(x)=\sfu(1)\sfv(x-1).
\end{equation}
Now, relation~\eqref{uu=vuu} is proven in two steps:
\begin{equation}
\sfu(x)\sfu(y)
=\sfv(x+1)\sfk\sfu(y)=\sfv(x+1)
\sfu(1)\sfu(y-1)
\end{equation}
where, in the first equality, we use \eqref{uk=v} and, in the second equality, \eqref{ku=uu}.
Likewise for relation~\eqref{vv=uuv}:
\begin{equation}
\sfv(x)\sfv(y)
=\sfu(x-1)\sfk\sfv(y)=\sfu(x-1)\sfu(1)\sfv(y-1)
\end{equation}
where, in the first equality, we use \eqref{uk=v} and, in the second equality, \eqref{kv=uv}.
\end{proof}
\subsection{Relating the two types of continued fractions}
Formulae~\eqref{uuu=u}--\eqref{vv=uuv} allow us to recursively transform  simple and subtraction continuous fractions to each other. For example, starting from a  simple continued fraction, we calculate
\begin{multline}
x=[\dot x_0 ,\dot x_1 ,\dot r_2x]=\sfu(\dot x_0)\sfu(\dot x_1)(\dot r_2x)=\sfv(\dot x_0+1)\sfu(1)\sfu(\dot x_1-1)(\dot r_2x)\\
=\langle \dot x_0+1 ,\sfv(2)^{\dot x_1-1}\sfu(1)\sfu(0)(\dot r_2x)\rangle=\langle \dot x_0+1 ,\widehat{\dot x_1-1} ,\sfu(1)\sfu(0)\sfu(\dot x_2)(\dot r_3x)\rangle\\
=\langle \dot x_0+1 ,\widehat{\dot x_1-1} ,\sfu(\dot x_2+1)(\dot r_3x)\rangle
=\langle \dot x_0+1 , \widehat{\dot x_1-1},\dot x_2+2 ,r_3x\rangle.
\end{multline}
Reciprocally, starting from a subtraction continued fraction, assuming that there exists $k\in\omega$ such that $x_i=2$ for $0< i< k$ and $x_{k}>2$, we calculate
\begin{multline}\label{regtosub}
x=\langle x_0 ,\widehat{k-1},x_{k} ,r_{k+1}x\rangle=\sfv( x_0)\sfv( 2)(r_2x)=\sfu(x_0-1)\sfu(1)\sfv(1)(r_2x)\\
=[x_0-1 ,\sfu(1)\sfv(1)(r_2x)]=[x_0-1 ,\sfu(1)\sfv(1)\sfv(2)(r_3x)]\\=[x_0-1 ,\sfu(1)\sfu(0)\sfu(1)\sfv(1)(r_3x)]
=[x_0-1 ,\sfu(2)\sfv(1)(r_3x)]\\
=\dots=[x_0-1 ,\sfu(k-1)\sfv(1)(r_{k}x)]
=[x_0-1 ,\sfu(k-1)\sfv(1)\sfv(x_{k})(r_{k+1}x)]\\
=[x_0-1 ,\sfu(k-1)\sfu(0)\sfu(1)\sfv(x_{k}-1)(r_{k+1}x)]
=[x_0-1 ,\sfu(k)\sfv(x_{k}-1)(r_{k+1}x)]\\
=[x_0-1 ,k ,\sfv(x_{k}-1)(r_{k+1}x)]=[x_0-1 ,k,x_{k}-2 ,\dot r_3x].
\end{multline}
Notice, that in the case when $x$ is a rational number, as soon as the input sequence in~\eqref{regtosub} stabilises to the constant sequence of $2$'s, the recursion halts producing new partial quotients in the simple continued fraction. This reflects the finiteness of  simple continued fractions of rational numbers~\cite{MR0146146}.
\begin{example}[The number $\pi$]
 Starting from the simple continued fraction
 \begin{equation}
\pi=[3 , 7, 15, 1, 292, 1, 1, 1, 2 , \dot r_{9}\pi]
\end{equation}
we derive the subtraction continued fraction as follows:
\begin{multline}
\pi=\sfu(3)\sfu(7)(\dot r_2\pi)=\sfv(4)\sfu(1)\sfu(6)(\dot r_2\pi)=\langle 4 ,\sfu(1)\sfu(6)(\dot r_2\pi)\rangle\\
=\langle 4 ,\sfv(2)^6\sfu(1)\sfu(0)(\dot r_2\pi)\rangle=\langle 4 ,\widehat{6} ,\sfu(1)\sfu(0)(\dot r_2\pi)\rangle
=\langle 4 ,\widehat{6} ,\sfu(16)\sfu(1)(\dot r_4\pi)\rangle\\
=\langle 4 ,\widehat{6},17 ,\sfu(1)\sfu(0)(\dot r_4\pi)\rangle
=\langle 4 ,\widehat{6},17 ,\sfu(293)\sfu(1)(\dot r_6\pi)\rangle\\
=\langle 4 ,\widehat{6},17,294 ,\sfu(1)\sfu(0)(\dot r_6\pi)\rangle
=\langle 4 ,\widehat{6},17,294 ,\sfu(2)\sfu(1)(\dot r_8\pi)\rangle\\
=\langle 4 ,\widehat{6},17,294,3 ,\sfu(3)(\dot r_{9}\pi)\rangle=\langle 4 ,\widehat{6},17,294,3,4 , r_{11}\pi\rangle
\end{multline}
with the corresponding sequence of right convergents 
\begin{equation}
 4,\ \frac72,\ \frac{10}3,\ \frac{13}4,\ \frac{16}5,\ \frac{19}6,\ \frac{22}7,\ \frac{355}{113}, 
 \frac{104348}{33215},
 \frac{312689}{99532}, \frac{1146408}{364913},\ \dots
\end{equation}
\end{example}
\begin{example}[The golden ratio] The golden ratio $\phi=\frac{\sqrt{5}+1}{2}$ is an irrational number with the simple continued fraction $\dot\phi_n=1$ for all $n\in\omega$. This corresponds to the equation $\phi=[1 ,\phi]$.
The following calculation shows that $\phi_0=2$ and $\phi_n=3$ for all $n>0$:
\begin{multline}
 \phi=\sfu(1)\sfu(1)(\phi)=\sfv(2)\sfu(1)\sfu(0)(\phi)\\=\langle 2 ,\sfu(1)\sfu(0)(\phi)\rangle=\langle 2 ,\sfu(2)\sfu(1)(\phi)\rangle
 =\langle 2 ,3 ,\sfu(1)\sfu(0)(\phi)\rangle=\dots
\end{multline}
In particular, we observe that the number 
\begin{equation}
\sfu(1)\sfu(0)(\phi)=\sfu(1)(1/\phi)=1+\phi
\end{equation}
solves the equation 
\begin{equation}
x=\langle 3 ,x\rangle\Leftrightarrow x_n=3\quad \forall n\in\omega.
\end{equation}
The sequence of right convergents of $\phi$ reads
\begin{equation}
2,\ \frac53,\ \frac{13}8,\ \frac{34}{21},\ \frac{89}{55},\ \frac{233}{144},\ \frac{610}{377},\ \frac{1597}{987},\ \frac{4181}{2584},\ \dots
\end{equation}
\end{example}
\begin{example}[The number $\log_23$] We have a  simple continued fraction
 \begin{equation}
\log_23=[1 ,1, 1, 2, 2, 3, 1, 5, 2 ,\dot r_9\log_23]
\end{equation}
which we transform into a subtraction continued fraction
\begin{multline}
 \log_23=\sfu(1)\sfu(1)(\dot r_2\log_23)=\langle 2 ,\sfu(1)\sfu(0)(\dot r_2\log_23)\rangle\\
 =\langle 2 ,\sfu(2)\sfu(2)(\dot r_4\log_23)\rangle
 =\langle 2 ,3,2 ,\sfu(1)\sfu(0)(\dot r_4\log_23)\rangle\\ =\langle 2 ,3,2 ,\sfu(3)\sfu(3)(\dot r_6\log_23)\rangle
 =\langle 2 ,3,2,4 ,\sfu(1)\sfu(2)(\dot r_6\log_23)\rangle\\ =\langle 2 ,3,2,4,\widehat{2} ,\sfu(1)\sfu(0)(\dot r_6\log_23)\rangle
 =\langle 2 ,3,2,4,\widehat{2} ,\sfu(2)\sfu(5)(\dot r_8\log_23)\rangle\\=\langle 2 ,3,2,4,\widehat{2},3 ,\sfu(1)\sfu(4)(\dot r_8\log_23)\rangle
 =\langle 2 ,3,2,4,\widehat{2},3,\widehat{4} ,\sfu(1)\sfu(0)(\dot r_8\log_23)\rangle\\ =\langle 2 ,3,2,4,\widehat{2},3,\widehat{4} ,\sfu(3)(\dot r_9\log_23)\rangle
 =\langle 2 ,3,2,4,\widehat{2},3,\widehat{4},4 ,r_{12}\log_23\rangle
\end{multline}
with the sequence of right convergents 
\begin{equation}
2,\ \frac53,\ \frac85,\ \frac{27}{17},\  \frac{46}{29},\ \frac{65}{41},\ \frac{149}{94},\ \frac{233}{147},\ \frac{317}{200},
\ \frac{401}{253},\ \frac{485}{306},\ \frac{1539}{971},\ \dots
\end{equation}

\end{example}
\def\cprime{$'$} \def\cprime{$'$}


\end{document}